\newtheorem{theorem}{Theorem}[section]
\newtheorem*{theorem*}{Theorem}
\newtheorem{remark}[theorem]{Remark}
\numberwithin{equation}{section}
\begin{document}
\title{An alternative proof of lower bounds for the first eigenvalue on manifolds}

\author{Yuntao Zhang}
\address{School of Mathematical Sciences and Statistics, Jiangsu normal University, Xuzhou, 221116, China}
\email{yuntaozhang@xznu.edu.cn}

\author{Kui Wang}
\address{School of Mathematic Sciences, Soochow University, Suzhou, 215006, China}
\email{kuiwang@suda.edu.cn}
%

\maketitle

\begin{abstract}
Recently, Andrews and Clutterbuck \cite{AC} gave a new proof of
the optimal lower eigenvalue bound on manifolds via modulus of continuity for solutions of the heat equation.
In this short note, we give an alternative proof of Theorem 2 in \cite{AC}.
More precisely, following Ni's method (\cite[Section 6]{Ni}) we give an elliptic proof of this theorem.
\end{abstract}
\maketitle

\section{Introduction}
The aim of this short note is to give an elliptic proof of the sharp lower
bound of the first non-trivial Neumann eigenvalue on manifolds. Precisely,
let $(M, g)$ be a compact Riemannian manifold (possibly with  smooth convex boundary) with diameter $D$.
For any given constant $\kappa $,  we set
$$
c_\kappa(t)=\left\{ \begin{matrix} \cos \sqrt{\kappa} t, & \, \kappa>0,\cr
                                   1, &\, \kappa=0, \cr
                                   \cosh \sqrt{|\kappa|}t, &\, \kappa<0.\end{matrix}\right.
$$
Denote by $\lambda(M, g)$ the first non-trivial Neumann eigenvalue of $M$, which is characterised by
$$
\lambda(M, g)=\inf\left\{ \frac{\int_M |\nabla u|^2\ dx}{\int_M u^2\ dx}: u \in W^{1,2}(M)\setminus 0 \text{\quad and} \int_M u \ dx=0\right.\}.
$$
Let $\mu(n,\kappa, D)$ be the first eigenvalue of a certain one-dimensional Sturm-Liouville problem, defined by
\begin{equation}\label{1.1}
\frac{1}{c^{n-1}_{\kappa}(s)}(\Phi'(s) c^{n-1}_{\kappa}(s))'+\mu \Phi(s)=0
\end{equation}
with Neumann boundary  $\Phi'(\pm\frac{D}{2})=0$. Then eigenvalue $\lambda(M, g)$ can be bounded from below by
$\mu(n,\kappa, D)$, provided by the Ricci curvature lower bound $(n-1)\kappa$. That is
\begin{theorem}\label{thm1}
Let $(M, g)$ be a compact Riemannian manifold without boundary (or with smooth convex boundary) and denote
by $D$ its diameter. Assume further that $\operatorname{Ric}_g\ge (n-1)\kappa g$. Then
\begin{equation}\label{1.2}
\lambda(M, g)\ge \mu(n,\kappa, D).
\end{equation}
Where $\mu$ is characterised in (\ref{1.1}).
\end{theorem}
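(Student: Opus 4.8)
The plan is to run the elliptic counterpart of the Andrews--Clutterbuck modulus-of-continuity argument, in the spirit of Ni's approach: transplant the one-dimensional model governing \eqref{1.1} onto $M$ as a gradient estimate for a first eigenfunction, via the Bochner formula and the maximum principle, and then read off \eqref{1.2}. Let $u$ be a first non-trivial Neumann eigenfunction, so $\Delta u=-\lambda u$ in $M$ with $\partial u/\partial\nu=0$ on $\partial M$, and write $\alpha=\min_M u<0<\max_M u=\beta$. First I would set up the comparison model adapted to $u$: for the Sturm--Liouville equation \eqref{1.1} with eigenvalue parameter $\lambda$, a shooting argument produces $a,b>0$ with $[-a,b]\subset\{c_\kappa>0\}$ (admissible by Bonnet--Myers when $\kappa>0$) and an increasing solution $\varphi$ on $[-a,b]$ satisfying the Neumann conditions $\varphi'(-a)=\varphi'(b)=0$ together with $\varphi(-a)=\alpha$, $\varphi(b)=\beta$; the asymmetry of $[-a,b]$ is tuned so that $\varphi$ matches the two extreme values of $u$. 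By the nodal characterisation of one-dimensional Neumann eigenfunctions, such a $\varphi$ is the first non-trivial eigenfunction of \eqref{1.1} on $[-a,b]$, so that $\lambda=\mu(n,\kappa,[-a,b])$.

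The core step is the gradient comparison
\begin{equation}\label{eq:grad}
|\nabla u|^2(x)\le\varphi'\!\left(\varphi^{-1}(u(x))\right)^2\qquad(x\in M),
\end{equation}
equivalently $|\nabla\theta|\le1$ for $\theta:=\varphi^{-1}\circ u$. I would prove \eqref{eq:grad} by the maximum principle applied to a test function of the form $Q=|\nabla u|^2-\varphi'(\varphi^{-1}(u))^2$. The boundary case comes first: if $Q$ attained a positive maximum on $\partial M$, then since $\partial u/\partial\nu=0$ makes $\nabla u$ tangent to $\partial M$, the outward normal derivative there equals $2\nabla^2u(\nabla u,\nu)=-2\,\mathrm{II}(\nabla u,\nabla u)\le0$ by convexity of $\partial M$, so Hopf's lemma excludes it. At an interior maximum the vanishing of $\nabla Q$ forces $\nabla u$ to be an eigenvector of $\nabla^2u$, which yields the refined Cauchy--Schwarz bound on $|\nabla^2u|^2$; inserting this, together with the Bochner identity $\tfrac12\Delta|\nabla u|^2=|\nabla^2u|^2+\langle\nabla u,\nabla\Delta u\rangle+\operatorname{Ric}(\nabla u,\nabla u)$, the relation $\Delta u=-\lambda u$, and $\operatorname{Ric}\ge(n-1)\kappa g$, into $\Delta Q\le0$, and then using \eqref{1.1} to rewrite the derivatives of $\varphi'(\varphi^{-1}(u))$, the maximum principle inequality collapses and forces \eqref{eq:grad}. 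The point is that \eqref{1.1} is precisely the equation making this cancellation occur in the presence of the curvature lower bound.

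Granting \eqref{eq:grad}, let $p,q\in M$ with $u(p)=\beta$, $u(q)=\alpha$, and let $\gamma:[0,\ell]\to M$ be a minimizing geodesic from $q$ to $p$, $\ell=d(p,q)\le D$. With $\theta(s)=\varphi^{-1}(u(\gamma(s)))$, \eqref{eq:grad} gives $|\theta'(s)|\le|\nabla u(\gamma(s))|/\varphi'(\theta(s))\le1$, so
$$
a+b=\varphi^{-1}(\beta)-\varphi^{-1}(\alpha)=\theta(\ell)-\theta(0)\le\ell\le D .
$$
Thus the admissible interval $[-a,b]$ has total length at most $D$, and by the one-dimensional comparison (the symmetric interval $[-D/2,D/2]$ minimises the first non-trivial Neumann eigenvalue of \eqref{1.1} among admissible intervals of total length at most $D$) we obtain $\lambda=\mu(n,\kappa,[-a,b])\ge\mu(n,\kappa,[-D/2,D/2])=\mu(n,\kappa,D)$, which is \eqref{1.2}.

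I expect the main obstacle to be the computation underlying \eqref{eq:grad}: organising the Bochner terms, the Cauchy--Schwarz refinement, the Ricci bound, and the derivatives of $\varphi'(\varphi^{-1}(u))$ so that \eqref{1.1} makes the resulting expression have the correct sign — in particular one must choose the test function with care, or verify directly that the maximum cannot occur in the regions near the extrema of $u$ where $\varphi'\to0$. A secondary point requiring care is the one-dimensional input: solvability of the shooting problem with the constructed interval staying inside $\{c_\kappa>0\}$, and the stated minimisation/monotonicity property of $\mu(n,\kappa,\cdot)$ for intervals of bounded length. The Neumann boundary condition enters only through convexity of $\partial M$ and Hopf's lemma, to confine the maximum in \eqref{eq:grad} to the interior.
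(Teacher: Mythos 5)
Your proposal, while a valid strategy in outline, is not the route the paper takes and is not really ``Ni's approach'' as you characterize it. What you describe --- transplanting a one-dimensional model $\varphi$ onto $M$ via a \emph{one-point} gradient comparison $|\nabla u|\le\varphi'\!\circ\varphi^{-1}\!\circ u$, proved by the Bochner formula and the maximum principle on $M$, then integrating along a geodesic between the extrema --- is the Kr\"oger/Bakry--Qian gradient-comparison method, the elliptic descendant of the Li--Yau and Zhong--Yang arguments that the paper explicitly sets aside in its introduction. The paper's proof is instead a genuine \emph{two-point} argument: it studies the quotient $Q(x,y)=\frac{\phi(y)-\phi(x)}{\Phi(d(x,y)/2)}$ on $\overline{M}\times\overline{M}\setminus\triangle$ (extended to the unit sphere bundle $UM$), with $\Phi$ taken to be the symmetric model on $[-D/2,D/2]$ from the start, and at a maximum point applies first- and second-variation formulas for $d(x,y)$ with the variation fields $V_i=\frac{c_\kappa((2s-1)s_0)}{c_\kappa(s_0)}\,e_i$ to extract $\lambda\ge\mu$ directly. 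The two-point method dispenses entirely with the shooting construction, the max/min comparison, and the monotonicity of $\mu$ in the interval length, at the cost of working on $M\times M$ and using second variation of arc length; your one-point method keeps the analysis on $M$ but must earn each of those three ingredients.

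That is where the real gaps in your write-up sit. You assert that for the ambient eigenvalue $\lambda$ a shooting argument produces $a,b>0$ and an increasing Neumann solution $\varphi$ of \eqref{1.1} on $[-a,b]$ with $\varphi(-a)=\min u$ and $\varphi(b)=\max u$ simultaneously; after rescaling this amounts to realizing a prescribed ratio $\max u/|\min u|$, and the fact that this is achievable for the actual first eigenvalue (with $[-a,b]$ staying inside $\{c_\kappa>0\}$) is exactly the nontrivial maximum-comparison step in Kr\"oger's and Bakry--Qian's arguments, not a routine ODE fact. The concluding claim that $\mu(n,\kappa,[-a,b])\ge\mu(n,\kappa,D)$ whenever $a+b\le D$ is a separate monotonicity statement for the 1D Neumann eigenvalue that also requires proof. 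Both gaps are fillable --- your outline is essentially Kr\"oger's proof --- but they are substantive, and the paper's two-point formulation with the fixed symmetric model was designed precisely to bypass them.
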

\begin{remark}
Estimate (\ref{1.2}) is sharp. For details we refer to  section 5 in \cite{An}.
\end{remark}
Theorem \ref{thm1} was proved by serval mathematicians.  Zhong and Yang \cite{ZY} proved the result for  case $\kappa=0$
and Kr\"oger proved the theorem for general case. Their proofs are based on Li--Yau type gradient estimates \cite{Li, LY}.
More Recently, Andrews and Clutterbuck also proved Theorem \ref{thm1} via the modulus of continuity in \cite{AC}, see also Andrew's survey
paper \cite{An}. In this paper, we provide a new proof, based on Ni's elliptic proof  \cite{Ni} of the eigenvalue fundamental gap.

\section{An elliptic proof of Theorem \ref{thm1}}
Consider the quotient of the oscillations of $\phi$ and $\Phi$ and let
$$
Q(x,y)=\frac{\phi(y)-\phi(x)}{\Phi(\frac{d(x,y)}{2})}
$$
on $\overline{M}\times \overline{M} \setminus \triangle$, where $\triangle:=\{(x,x):x\in \overline{M}\}$ denotes the diagonal.
The function $Q$ can be extended to a set $(\overline{M}\times \overline{M} \setminus \triangle) \bigcup UM$. Here
$UM:=\{(x, X): x\in M, |X|=1\}$.  On $UM$, with the extension $Q(x, X)$  is defined naturally as
$$
Q(x,X):=\frac{2 \langle \nabla \phi(x), X\rangle}{\Phi'(0)}.
$$
Now we divide the proof into two cases.

\textbf{Case 1:} the maximum of $Q$, which is clearly nonzero and denoted by $m$, is attained at
some $(x_0, y_0)$ with $x_0\ne y_0$. The Neumann condition  and strict convexity of $M$
forces that both $x_0$ and $y_0$ must be in $M$. Indeed if $x_0 \in \partial M$, then taking  derivative along normal direction $\nu$ at $x_0$ yields
\begin{eqnarray*}
\left.\frac{\partial}{\partial \nu}\right|_{x_0}Q(x,y_0)&=&\left.\frac{\partial}{\partial \nu}\right|_{x_0}\frac{\phi(y_0)-\phi(x)}{\Phi(\frac{d(x,y_0)}{2})}\\
&=&-\frac{m\Phi'(\frac{d(x_0,y_0)}{2})}{2\Phi(\frac{d(x_0,y_0)}{2})}\left.\frac{\partial}{\partial \nu}\right|_{x_0}d(x,y_0)\\
&<&0,
\end{eqnarray*}
which is a contradiction with the maximum assumption. Here in the last inequality, we used the convexity assumption of $M$.

Let $\gamma_0: [0,1]\rightarrow M$ be a minimizing geodesic joining $y_0$ and $x_0$
with $|\gamma'|=2s_0$.
To compute the  derivatives, we choose  'Fermi' coordinates $ \left\{e_i(s)\right\}$ $(i=1,2,\cdots, n)$ along $\gamma_0$ with
$e_n(s)=\frac{1}{2s_0}\gamma'(s)$.

Then the 1st derivative gives
\begin{equation}\label{2.1}
\nabla \phi(y_0)=\frac{m}{2}\Phi'(s_0)e_n(1),
\end{equation}
and
\begin{equation}\label{2.2}
\nabla \phi(x_0)=\frac{m}{2}\Phi'(s_0)e_n(0).
\end{equation}

Firstly, from
$$\left.\frac{d^2Q(y_0+\theta e_n(1), x_0-\theta e_{n}(0))}{d\theta^2}\right|_{\theta=0}\le 0,$$
we have
\begin{eqnarray}\label{2.3}
0&\ge& \frac{\phi_{nn}(y_0)-\phi_{nn}(x_0)}{\Phi(s_0)}+2\frac{\phi_{n}(y_0)+\phi_{n}(x_0)}{\Phi^2(s_0)}\Phi'(s_0)
-\frac{\phi(y_0)-\phi(x_0)}{\Phi^2(s_0)}\Phi''(s_0)+2m\left(\frac{\Phi'(s_0)}{\Phi(s_0)}\right)^2\nonumber\\
&=&\frac{\phi_{nn}(y_0)-\phi_{nn}(x_0)}{\Phi(s_0)}
-\frac{\phi(y_0)-\phi(x_0)}{\Phi^2(s_0)}\Phi''(s_0).\nonumber\\
\end{eqnarray}

For $i\le n-1$, we define the variation fields $V_i(s)$ along $\gamma_0(s)$ by
$$
V_i(s)=\frac{c_{\kappa}\left((2s-1)s_0\right)}{c_{\kappa}(s_0)}e_i(s).
$$
Then by the variation formulas, we have that
$$
\left.\frac{d}{dv}\right|_{v=0}|\gamma_v|=\left.\frac{1}{2s_0} g(\gamma', V_i)\right|^1_0=0,
$$
and
$$
\left.\frac{d^2}{dv^2}\right|_{v=0}|\gamma_v|=\left.\frac{1}{2s_0} g(\gamma', \nabla_{V_i} V_i)\right|^1_0+\frac{1}{2s_0}
\int_0^1|(\nabla_{\gamma'} V_i)^{\bot}|^2-\langle R(\gamma', V_i)\gamma', V_i\rangle\ ds.
$$
By the way of variation, we can require $\nabla_{V_i} V_i=0$ for $s\in [0,1]$.
Direct calculation gives
$$
\frac{1}{2s_0}\int_0^1|(\nabla_{\gamma'} V_i)^{\bot}|^2\ ds=2s_0\int_0^1(\frac{c_{\kappa}'\left((2s-1)s_0\right)}{c_{\kappa}(s_0)})^2\ ds=
\int_{-s_0}^{s_0}(\frac{c_{\kappa}'(x)}{c_{\kappa}(s_0)})^2\ dx.
$$
By the integration by parts, the definition of $c_\kappa$,  and $c_{\kappa}''+\kappa c_\kappa=0$, we obtain
$$
\int_{-s_0}^{s_0}(\frac{c_{\kappa}'(x)}{c_{\kappa}(s_0)})^2\ dx=2\frac{c_{\kappa}'(s_0)}{c_{\kappa}(s_0)}
+\int_{-s_0}^{s_0}\kappa(\frac{c_{\kappa}(x)}{c_{\kappa}(s_0)})^2\ dx.
$$
Thus
$$
\left.\frac{d^2}{dv^2}\right|_{v=0}|\gamma_v|=2\frac{c_{\kappa}'(s_0)}{c_{\kappa}(s_0)}
+\int_{-s_0}^{s_0}(\frac{c_{\kappa}(x)}{c_{\kappa}(s_0)})^2 (\kappa-\langle R(e_n, e_i)e_n, e_i\rangle)\ dx.
$$
Then from the second variation we get
$$
\phi_{ii}(y_0)-\phi_{ii}(x_0)-\Phi'(s_0)m\left( 2\frac{c_{\kappa}'(s_0)}{c_{\kappa}(s_0)}
+\int_{-s_0}^{s_0}(\frac{c_{\kappa}(x)}{c_{\kappa}(s_0)})^2 (\kappa-\langle R(e_n, e_i)e_n, e_i\rangle)\ dx\right)\le 0.
$$
Adding this inequality over $i=1,2,\cdots, n-1$ and using the curvature condition, we assert
\begin{equation}\label{2.4}
\sum_{i=1}^{n-1}\left(\phi_{ii}(y_0)-\phi_{ii}(x_0)\right)-(n-1)m\Phi'(s_0)\frac{c_{\kappa}'(s_0)}{c_{\kappa}(s_0)}\le 0.
\end{equation}

Combining (\ref{2.3}) and (\ref{2.4}) we have
$$
\triangle \phi(y_0)-\triangle \phi(x_0)-m\Phi''(s_0)-(n-1)m\Phi'(s_0)\frac{c_{\kappa}'(s_0)}{c_{\kappa}(s_0)}\le 0.
$$
That is
$$
-\lambda(M, g)\left(\phi(y_0)-\phi(x_0)\right)+m\mu(n,\kappa, D)\Phi(s_0)\le 0,
$$
which proves $\lambda(M, g)\ge \mu(n,\kappa, D)$.

\textbf{Case 2: }the maximum of $Q$ is attained at some $(x_0, X_0)\in UM$. It is easy to see that
$X_0=\frac{\nabla \phi(x_0)}{|\nabla \phi(x_0)|}$ and $m=2|\nabla \phi(x_0)|$. By the assumption, we
know $x_0\in M$.
Indeed if $x_0 \in \partial M$, then taking  derivative along normal direction $\nu$ at $x_0$ yields
\begin{eqnarray*}
\left.\frac{\partial}{\partial \nu}\right|_{x_0}|\nabla \phi(x)|^2&=&\left.\frac{\partial}{\partial \nu}\right|_{x_0}
 \left\langle\nabla \phi(x),\nabla \phi(x)  \right\rangle\\
 &=&2\left.\operatorname{Hess}\phi\left(\nu, \nabla \phi(x)\right)\right|_{x_0}\\
&=&2\left\langle\nabla_{\nabla \phi(x_0)} \nabla \phi(x_0),\nu  \right\rangle\\
 &=&-2II(\nabla \phi(x_0), \nabla \phi(x_0))\\
&<&0,
\end{eqnarray*}
contradicting with the maximum assumption. Here $II$ denotes the second fundamental form of $M$ at $x_0$.

 Now pick up an orthonormal frame $\{e_i\}$ at $x_0$ so that $e_n=X_0$.
We also parallel translate it to a
neighborhood of $x_0$.

Since $|\nabla \phi(x)|^2$ attains its maximum at the interior point $x_0$, we have that
$$
\phi_{kn}(x_0)=0$$
for any $1\le k\le n$.

Moreover, the maximum principle concludes for any $1\le k\le n-1$,
\begin{eqnarray}\label{2.5}
0\ge\phi_{kkn}\phi_n+|\phi_n|^2\langle R(e_k, e_n)e_k, e_n\rangle.
\end{eqnarray}

Let $x(s)=\exp_{x_0}(-se_n) $, $y(s) = \exp_{x_0}(se_n)$ and $g(s) = Q(x(s), y(s))$. Since Q achieves its
maximum at $(x_0,X_0)$.
We have that $g(s) \le g(0) = m$ for all $s\in(-\epsilon,\epsilon)$, which
implies that $\lim_{s\rightarrow 0}g'(s)=0$ and $\lim_{s\rightarrow 0}g''(s)\le0$.

Direct calculation shows that
\begin{eqnarray*}
g'(s)=\frac{\langle \nabla \phi(y(s)), e_n(s)\rangle+\langle \nabla \phi(x(s)),e_n(-s)\rangle}{\Phi(s)}
-g(s)\frac{\Phi'(s)}{\Phi(s)},
\end{eqnarray*}
and
\begin{eqnarray*}
g''(s)&=&\frac{\nabla^2_{e_n e_n}\phi(y(s))-\nabla^2_{e_n e_n}\phi(x(s))}{\Phi(s)}
-2\frac{\langle \nabla\phi(y(s)), e_n\rangle+\langle \nabla \phi(x(s)), e_n \rangle}{\Phi(s)}\frac{\Phi'(s)}{\Phi(s)}\\
&&-g(s)\frac{\Phi''(s)}{\Phi(s)}+2g(s)\left(\frac{\Phi'(s)}{\Phi(s)}\right)^2.
\end{eqnarray*}
Observing that $\lim_{s\rightarrow 0}\frac{g'(s)}{\Phi(s)}=g''(0)$,
and making use of the first equation above, the
second equation implies that
\begin{equation}\label{2.6}
g''(0)=2\phi_{nnn}(x_0)-2g''(0)-m\lim_{s\rightarrow 0}\frac{\Phi''(s)}{\Phi(s)}.
\end{equation}
From the equation (\ref{1.1}), it follows that
$$
\lim_{s\rightarrow 0}\frac{\Phi''(s)}{\Phi(s)}=-\lim_{s\rightarrow 0}(n-1)\frac{c_{\kappa}'(s)}{\Phi(s)}\frac{\Phi'(s)}{c_\kappa(s)}-\mu(n,\kappa, D)=(n-1)\kappa-\mu(n,\kappa, D).
$$
Then we have that
\begin{equation}\label{2.7}
2\phi_{nnn}(x_0)-m(n-1)\kappa+\mu(n,\kappa, D) m\le 0.
\end{equation}
Combining (\ref{2.5}) and (\ref{2.7}), we derive
\begin{eqnarray*}
2\langle \nabla \triangle \phi, \nabla \phi\rangle+2\text{Ric}(e_n,e_n)|\phi_n|^2
-2|\phi_n|^2(n-1)\kappa+2\mu(n,\kappa, D)|\phi_n|^2\le 0,
\end{eqnarray*}
which also implies $\lambda(M, g)\ge \mu(n,\kappa, D)$.

\section{For $p$-Laplacian}
In this section, we mainly deal with the first  non-trivial Neumann eigenvalue for $p$-Laplacian. Then
the corresponding eigenfunctions satisfy
\begin{equation}\label{3.1}
\operatorname{div}(|\nabla u|^{p-2}\nabla u)+\lambda_p |u|^{p-2}u=0.
\end{equation}

Denote by $\lambda_p(M, g)$ the first  non-trivial Neumann eigenvalue for $p$-Laplacian on $(M, g)$,
and then by Rayleigh quotient $\lambda_p(M, g)$ is characterised by
$$
\lambda_p(M, g)=\inf\left\{\frac{\int_M |\nabla u|^p\ dx}{\int_M |u|^p\ dx}: u\in W^{1,p}(M)\setminus 0 \text{\quad and} \int_M |u|^{p-2}u\ dx=0\right.\}.
$$

Similarly we denote by $\mu_p(n,\kappa, D)$ the first eigenvalue of  a certain one-dimensional Sturm-Liouville problem corresponding to $p$-Laplacian, i.e.
\begin{equation}\label{3.2}
(p-1)|\Phi'|^{p-2}\Phi''+(n-1)\frac{c_{\kappa}'}{c_{\kappa}}|\Phi'|^{p-2}\Phi' +\mu_p(n,\kappa, D) |\Phi|^{p-2}\Phi=0
\end{equation}
with $\Phi(0)=\Phi'(\pm\frac{D}{2})=0$ and $\Phi'(0)=1$. Then for p-Laplace operator, the following theorem holds true.
\begin{theorem}[see \cite{NV}]\label{thm2}
Under the same assumption as in Theorem \ref{thm1}. Then for any $1<p<\infty$ it follows that
$$
\lambda_p(M, g)\ge \mu_p(n,\kappa, D).
$$
\end{theorem}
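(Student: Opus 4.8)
The plan is to combine the maximum-principle technique of Section~2 with a reparametrization of the eigenfunction. Let $u$ be a first non-trivial Neumann $p$-eigenfunction, suitably normalized; by the regularity theory for the $p$-Laplacian, $u\in C^{1,\alpha}(\overline M)$ and $u$ is smooth on $\{\nabla u\ne0\}$. Let $w$ solve the one-dimensional ODE \eqref{3.2} with $\lambda_p$ in place of $\mu_p$, on an interval $[\alpha,\beta]$ with $w(\alpha)=\min_M u$, $w'(\alpha)=0$, and $w(\beta)=\max_M u$ (one must first check that such a $w$ exists, continuing $w$ as long as $w'>0$), so that $\psi:=w^{-1}\circ u$ is well defined with values in $[\alpha,\beta]$. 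A computation from \eqref{3.1} and the ODE for $w$ gives
\[
\operatorname{div}\!\left(|\nabla\psi|^{p-2}\nabla\psi\right)-(n-1)\frac{c_\kappa'(\psi)}{c_\kappa(\psi)}|\nabla\psi|^{p}=\lambda_p\frac{|w(\psi)|^{p-2}w(\psi)}{|w'(\psi)|^{p-2}w'(\psi)}\bigl(|\nabla\psi|^{p}-1\bigr),
\]
whose right-hand side vanishes exactly on $\{|\nabla\psi|=1\}$. The goal is the gradient comparison $|\nabla\psi|\le1$ on $M$; granting it, integrating $\langle\nabla\psi,\gamma'\rangle\le|\nabla\psi|\le1$ along a minimizing geodesic $\gamma$ joining the minimum and maximum points of $u$ gives $\beta-\alpha\le|\gamma|\le D$, and since among intervals of length at most $D$ the centered one $[-D/2,D/2]$ minimizes the first eigenvalue of \eqref{3.2}, this yields $\lambda_p\ge\mu_p$.

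To prove $|\nabla\psi|\le1$ one runs the maximum principle on $|\nabla\psi|^{2}$, mirroring ``Case~2'' of Section~2. Its maximum over $M$ is attained at an interior point $z_0$ --- away from $\partial M$ by the second fundamental form computation of Section~2, which does not involve $p$ --- and, if that maximum exceeds $1$, at a point with $\nabla u(z_0)\ne0$, near which $\psi$ is smooth. Choosing a frame with $e_n=\nabla\psi(z_0)/|\nabla\psi(z_0)|$, the first-order condition gives $\psi_{nk}(z_0)=0$ for all $k$. Two ingredients then replace the one-dimensional computations of Section~2: (i) at a point where $\nabla\psi=a\,e_n$ one has $\operatorname{div}(|\nabla\psi|^{p-2}\nabla\psi)=a^{p-2}\bigl(\sum_{i=1}^{n-1}\psi_{ii}+(p-1)\psi_{nn}\bigr)$, so the displayed equation weights the $e_n$ direction by $p-1$ and the directions $e_1,\dots,e_{n-1}$ by $1$, exactly as in \eqref{3.2}; and (ii) the Bochner formula for the $p$-Laplacian --- its linearization applied to $|\nabla\psi|^{2}$ --- together with $\Ric_g\ge(n-1)\kappa g$ and the displayed equation, whose right-hand side at $z_0$ now carries the favorable factor $|\nabla\psi(z_0)|^{p}-1>0$. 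Combined with $\psi_{nk}(z_0)=0$, these should force $|\nabla\psi(z_0)|\le1$, a contradiction.

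I expect essentially all the work to sit in this gradient comparison, for two reasons. First, the degeneracy of the $p$-Laplacian: $\psi$ is only $C^{1,\alpha}$ and smooth merely on $\{\nabla u\ne0\}$, so the second-order computations must be localized there and the critical set $\{\nabla u=0\}$ together with the endpoints of the range of $\psi$ handled separately. Second, the $p$-Bochner computation is substantially heavier than the $p=2$ case of Section~2, and one must use the factor $|\nabla\psi|^{p}-1$ on the right-hand side of the displayed equation precisely in order to close the estimate; one also has to verify the auxiliary one-dimensional facts used above (existence of $w$ reaching $\max_M u$, and the eigenvalue-minimality of the centered interval). Given the gradient comparison, the passage to $\lambda_p\ge\mu_p$ is routine.
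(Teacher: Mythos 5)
Your proposal is a genuinely different route from the paper's. The paper proves Theorem~\ref{thm2} by the same two-point device as Section~2: it maximizes the quotient
$Q(x,y)=\frac{\phi(y)-\phi(x)}{\Phi(d(x,y)/2)}$ over $\overline M\times\overline M$ (extended to $UM$), shows by way of the singular limit $\lim_{s\to0}\Phi''(s)/\Phi(s)=-\infty$ (valid for $1<p<2$ because of \eqref{3.2}) that the maximum cannot lie on $UM$, and then reruns the second-variation-of-arclength computation of Case~1 with the $p$-weights $|\tfrac m2\Phi'(s_0)|^{p-2}$ inserted. Crucially, the paper's own argument establishes the estimate only for $1<p<2$; the general range $1<p<\infty$ in the theorem statement is attributed to \cite{NV}. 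What you sketch is precisely the Kr\"oger--Bakry--Qian style one-point gradient comparison that \cite{Va, NV} use: reparametrize $u=w(\psi)$ via the one-dimensional model, derive the PDE for $\psi$, and bound $|\nabla\psi|\le1$ by a $p$-Bochner maximum principle. That route, if completed, buys the full range $1<p<\infty$ (whereas the two-point argument exploits the degeneracy $\Phi''/\Phi\to-\infty$ specific to $p<2$); the paper's route buys a shorter, more self-contained argument that avoids the $p$-Bochner machinery and the ODE analysis entirely.

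However, your sketch has a substantive gap at its core. At a maximum point of $|\nabla\psi|^2$ exceeding $1$, the right-hand side of your displayed equation carries the factor $\lambda_p\,\frac{|w(\psi)|^{p-2}w(\psi)}{|w'(\psi)|^{p-2}w'(\psi)}\,(|\nabla\psi|^p-1)$, and while the last factor is indeed positive, the first factor changes sign with $w(\psi)$. So a naive maximum principle applied to $|\nabla\psi|^2$ alone does not close; this is exactly why Naber and Valtorta had to compare against a carefully chosen one-parameter family of model problems and run a level-set (``one-dimensionalization'') argument rather than a single global maximum-principle step. Likewise, ``existence of $w$ reaching $\max_M u$'' after a suitable normalization and the eigenvalue-minimality of the centered interval are nontrivial lemmas in \cite{NV}, not routine checks. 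In short, your outline correctly identifies the strategy of the cited reference, and correctly identifies where the difficulty lies, but the step you flag as ``essentially all the work'' is genuinely all the work and is not filled in; note also that it is a different proof from the one the paper gives, which handles only $p<2$ by the two-point method.
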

We mention here that Valtorta \cite{Va} proved Theorem \ref{thm2} for the case $\kappa=0$ in 2012 and the main tool
used is a gradient comparison based on a generalized p-B\"ochner formula. Shortly, Naber and Valtorta \cite{NV} proved the theorem
for general $\kappa$, based on a refined gradient comparison technique and a careful analysis of the underlying model spaces.
In  survey paper \cite{An}, Andrews also proved the results for $p\le 2$ and $p>2$,  based on the modulus of continulity
 and the height-dependent gradient estimates for solutions of
the heat equation respectively.

\begin{remark}
The elliptic proof in section 2 works similarly for Theorem \ref{thm2} for case $p\le 2$.
\end{remark}
Now we are in position to use the elliptic method to show Theorem \ref{thm2} for $1<p<2$.
\begin{proof}[Proof of Theorem \ref{thm2} for $1<p<2$]
Firstly, we claim that for $1<p<2$, the maximum of $Q$ can not be attained in $UM$.
In fact if the maximum of $Q$ is attained at some  $(x_0, X_0)\in UM$, then it is easy to see that
$X_0=\frac{\nabla \phi(x_0)}{|\nabla \phi(x_0)|}$ and $m=2|\nabla \phi(x_0)|$.
From the equation (\ref{3.1}), it follows that
$$
\lim_{s\rightarrow 0}\frac{\Phi''(s)}{\Phi(s)}=-\infty,
$$
contradicting with equality (\ref{2.6}) and $\lim_{s\rightarrow 0}g''(s)\le0$. Thus we prove  the claim.

The maximum of $Q$, which is clearly nonzero and denoted by $m$, is attained at
some $(x_0, y_0)$ with $x_0\ne y_0$.

Recall from (\ref{2.1}--\ref{2.4}) that
\begin{eqnarray}\label{3.3}
0\ge(p-1)|\frac{m}{2}\Phi'(s_0)|^{p-2} \frac{\phi_{nn}(y_0)-\phi_{nn}(x_0)}{\Phi(s_0)}
-(p-1)|\frac{m}{2}\Phi'(s_0)|^{p-2} \frac{m\Phi''(s_0)}{\Phi(s_0)},\nonumber\\
\end{eqnarray}
and
\begin{equation}\label{3.4}
|\frac{m}{2}\Phi'(s_0)|^{p-2}\sum_{i=1}^{n-1}\left(\phi_{ii}(y_0)-\phi_{ii}(x_0)\right)- (n-1)m\Phi'(s_0)|\frac{m}{2}\Phi'(s_0)|^{p-2}\frac{c_{\kappa}'(s_0)}{c_{\kappa}(s_0)}\le 0.
\end{equation}

Combining (\ref{3.3}) and (\ref{3.4}) we have
$$
\triangle_p \phi(y_0)-\triangle_p \phi(x_0)-m(p-1)|\frac{m}{2}\Phi'(s_0)|^{p-2}\Phi''(s_0)- (n-1)m|\frac{m}{2}\Phi'(s_0)|^{p-2}\Phi'(s_0)\frac{c_{\kappa}'(s_0)}{c_{\kappa}(s_0)}\le 0.
$$
That is
$$
-\lambda_p(M,g)\left(|\phi(y_0)|^{p-2}\phi(y_0)-|\phi(x_0)|^{p-2}\phi(x_0)\right)+m(\frac{m}{2})^{p-2}\mu_p(n,\kappa, D)|\Phi(s_0)|^{p-2}\Phi(s_0)\le 0,
$$
which implies
\begin{eqnarray*}
m^{p-1}(\frac{1}{2})^{p-2}\mu_p (n,\kappa, D)\Phi^{p-1}(s_0)&\le&\lambda_p(M,g)\left(|\phi(y_0)|^{p-2}\phi(y_0)-|\phi(x_0)|^{p-2}\phi(x_0)\right)\\
&=&\lambda_p(M,g)\left(|\phi(y_0)|^{p-1}+|\phi(x_0)|^{p-1}\right)\\
&\le&\frac{\lambda_p(M,g)}{2^{p-2}}\left(|\phi(y_0)|+|\phi(x_0)|\right)^{p-1},
\end{eqnarray*}
where we used the condition $1<p\le 2$ in the last inequality.
Thus we conclude from the above inequality that
 $$\lambda_p(M,g)\ge \mu_p(n,\kappa, D).$$
\end{proof}


\end{document}